\newcommand{\area}{\operatorname{area}}
\newtheorem*{theorem}{Theorem}
\title{Regular matchstick graphs on the sphere\footnote{Research partially supported by ERC grant no. 882971, ``GeoScape,'' and by the Erd\H os Center.}
}
\author{Konrad J. Swanepoel\\ Department of Mathematics, London School of Economics and Political Science}
\date{}
\begin{document}
\maketitle
\begin{abstract}
We show that the $5$-regular matchstick graphs on the sphere are exactly the five $5$-regular contact graphs of congruent caps on the sphere found by R. M. Robinson (1969).

\medskip
    \centering
    \includegraphics[scale=0.0605, trim = 0 1 0 1, clip]{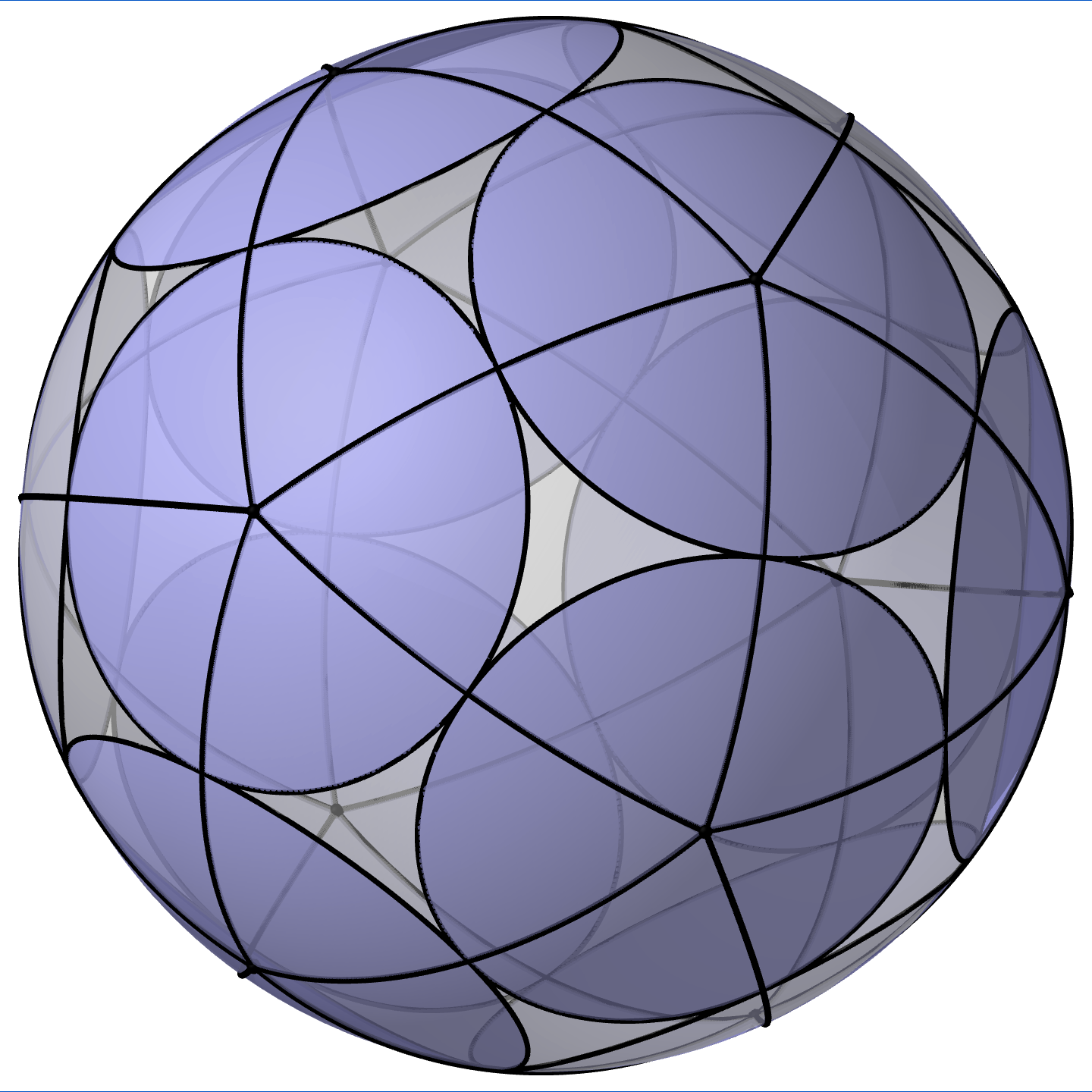} \quad
    \includegraphics[scale=0.0605, trim = 0 1 0 1, clip]{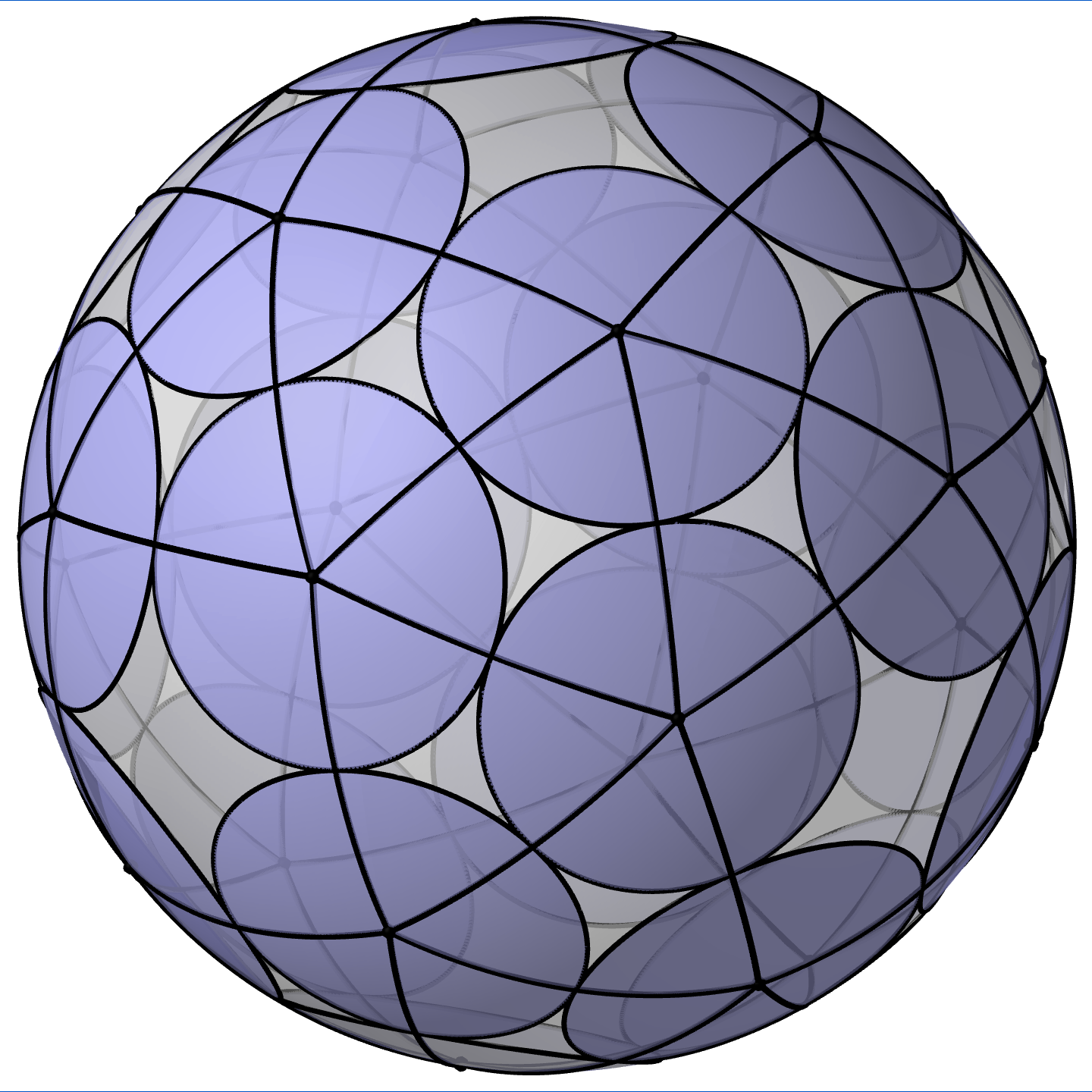} \quad
    \includegraphics[scale=0.0605, trim = 0 1 0 1, clip]{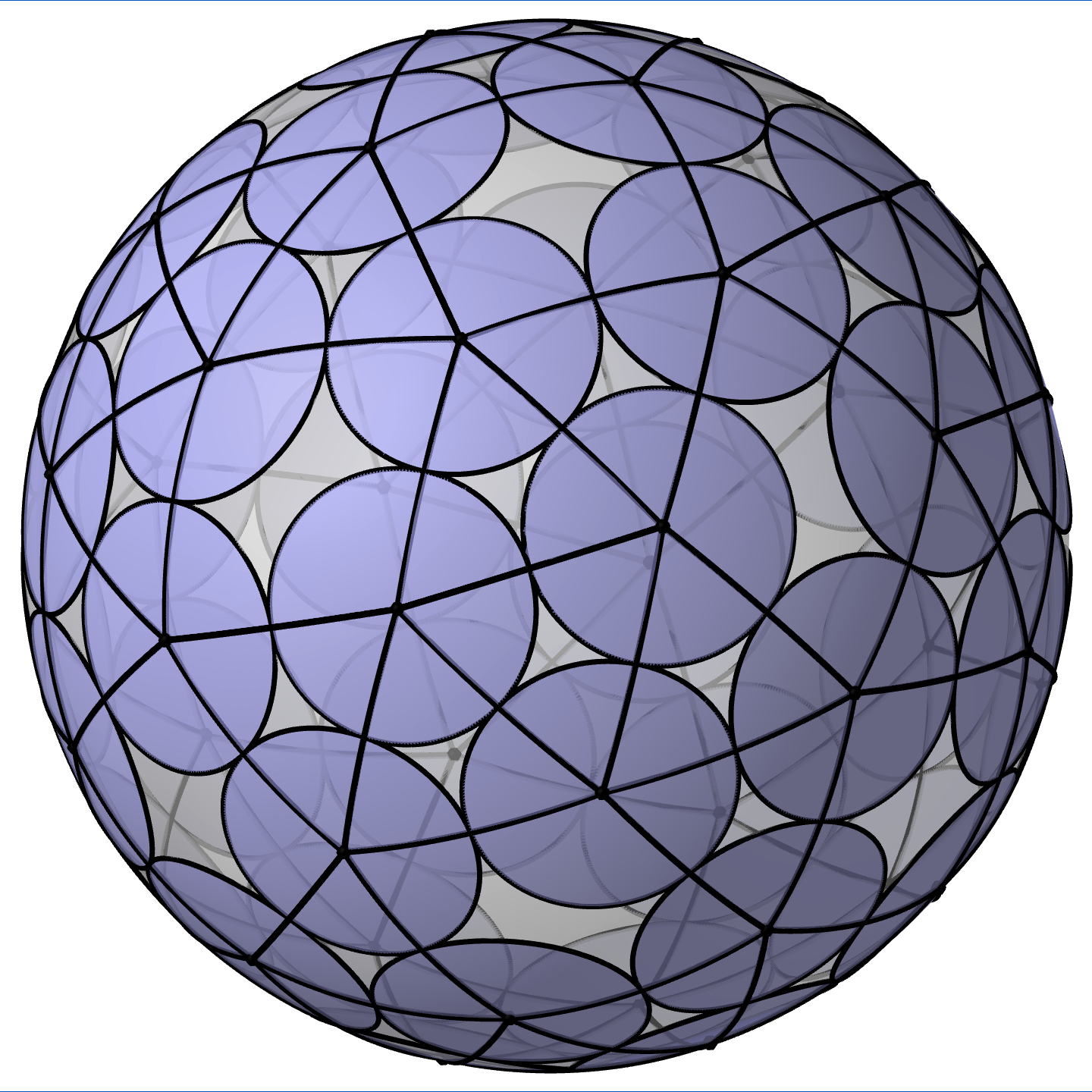}

    \includegraphics[scale=0.0605, trim = 0 1 0 1, clip]{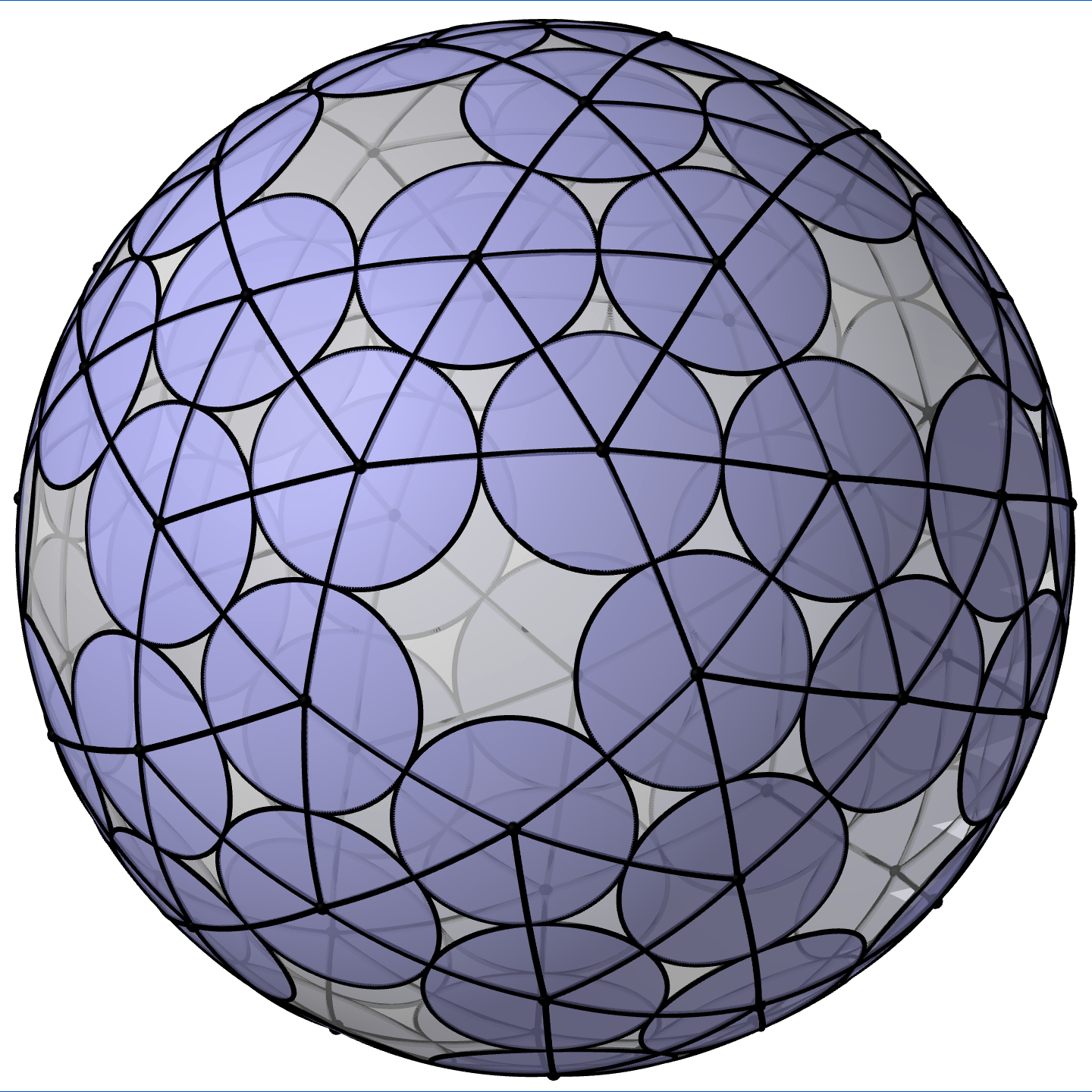} \quad
    \includegraphics[scale=0.0605, trim = 0 1 0 1, clip]{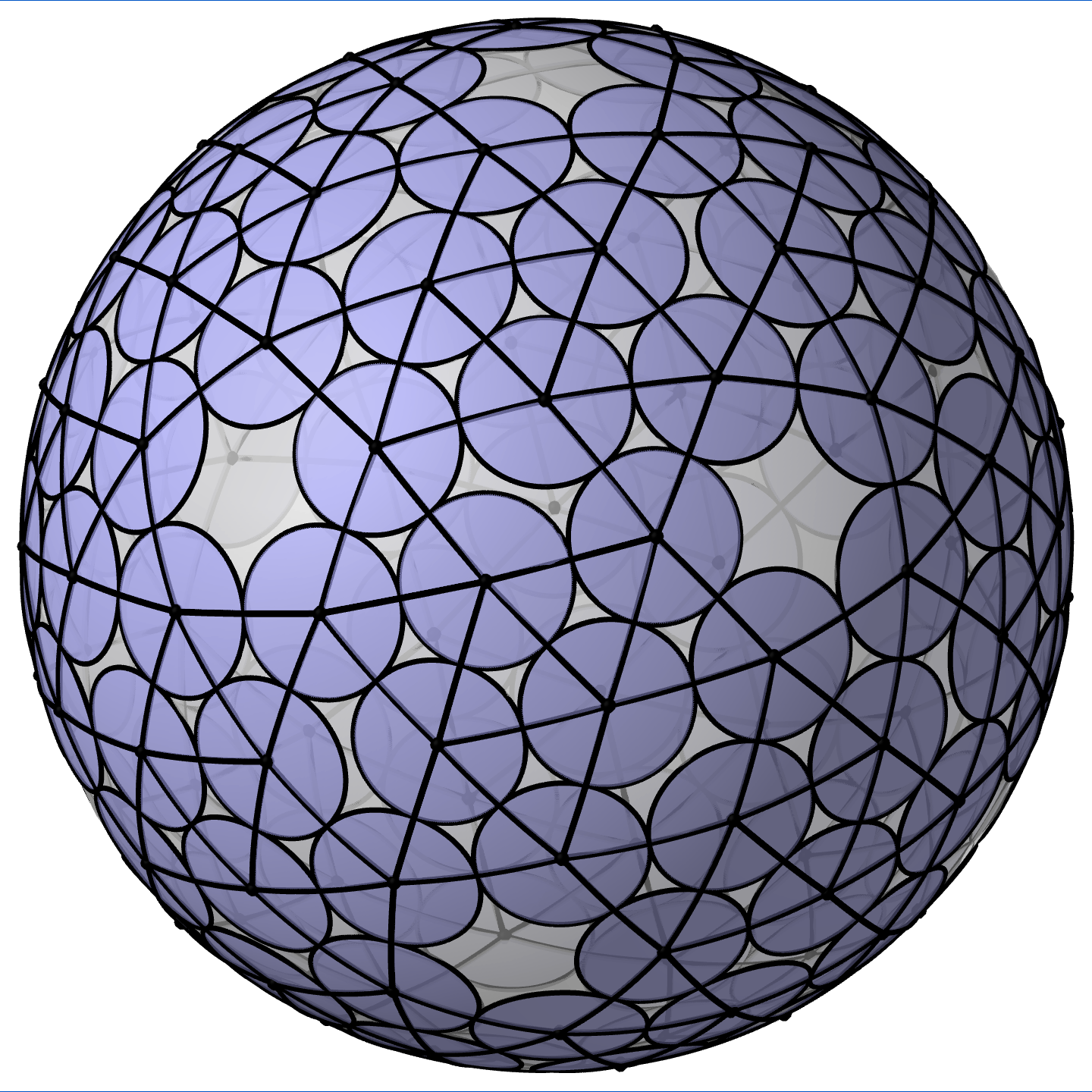}
\end{abstract}

\section{Introduction}
A \emph{matchstick graph} is a graph drawn in the plane with each edge represented as a unit straight-line segment, such that two edges have a common point only if this point is a common endpoint of the two edges.
Harborth \cite{oberwolfach,lighter} introduced this notion, and asked for which values of $k$ there exists a $k$-regular matchstick graph in the plane.
It is easy to find the smallest such graphs for $k\leq 3$.
Harborth's example \cite{lighter} of a $4$-regular matchstick graph with $52$ vertices is still the smallest known one, while the currently best lower bound is $34$ vertices, shown by Kurz \cite{kurz-4-regular}.
Blokhuis \cite{blokhuis} showed that there are no $5$-regular matchstick graphs in the Euclidean plane.
Kurz and Pinchasi in this \textsc{Monthly} \cite{kurz} gave a ``book proof'' (see \cite{klarreich}) that uses the Euler polyhedral formula combined with a charging argument.
As they observed at the end of their paper, following their argument on the sphere does not lead to a contradiction, and indeed the icosahedral net on the sphere is a $5$-regular matchstick graph (Figure~\ref{fig12}).

Here, we define a \emph{matchstick graph on the sphere} to be a simple graph drawn on the sphere with each edge represented as a minor great circular arc of some fixed angular length $\lambda < \pi$, in such a way that there are no crossing edges.
We will show that a modification of the charging argument in \cite{kurz} gives that any $5$-regular matchstick graph on the sphere is the contact graph of a packing of equal circular caps, each touching exactly $5$ others.
The \emph{contact graph} of a packing of spherical caps is the graph with vertex set the centers of the caps, and with edges the great circular arcs between the centers of any two touching caps.

The $5$-regular contact graphs of packings of equal spherical caps have already been classified in 1969 by R. M. Robinson \cite{Robinson1969}; there are five of them.
Three of them are the vertex sets of well-known polytopes: the icosahedron ($12$ vertices, Figure~\ref{fig12}), the snub cube ($24$ vertices, Figure~\ref{fig24}), and the snub icosahedron ($60$ vertices, Figure~\ref{fig60}).
Two further ones, on $48$ and $120$ vertices (Figures~\ref{fig48} and \ref{fig120}) both come from the union of two orbits of the symmetry group of the cube and of icosahedron, respectively.
These five packings are depicted, together with their contact graphs, in Figures~\ref{fig12} to \ref{fig120}.
While a drawing of the $5$-regular packing of $48$ congruent spherical caps exists in the literature \cite[Fig.~11]{Clare-Kepert}, we are not aware of any previous drawing of the packing of $120$ caps.
We drew these figures using SageMath, and html versions of them that can be rotated are available as ancillary files to this arXiv submission.

Thus, we obtain a complete classification of all $5$-regular matchstick graphs on the sphere.
The proof even allows us to obtain a slightly stronger result, where we only assume that each vertex has at least $5$ neighbors.

\begin{figure}
    \includegraphics[scale=0.138, trim = 0 1 0 1, clip]{12.png} \hfill  \includegraphics[scale=0.145]{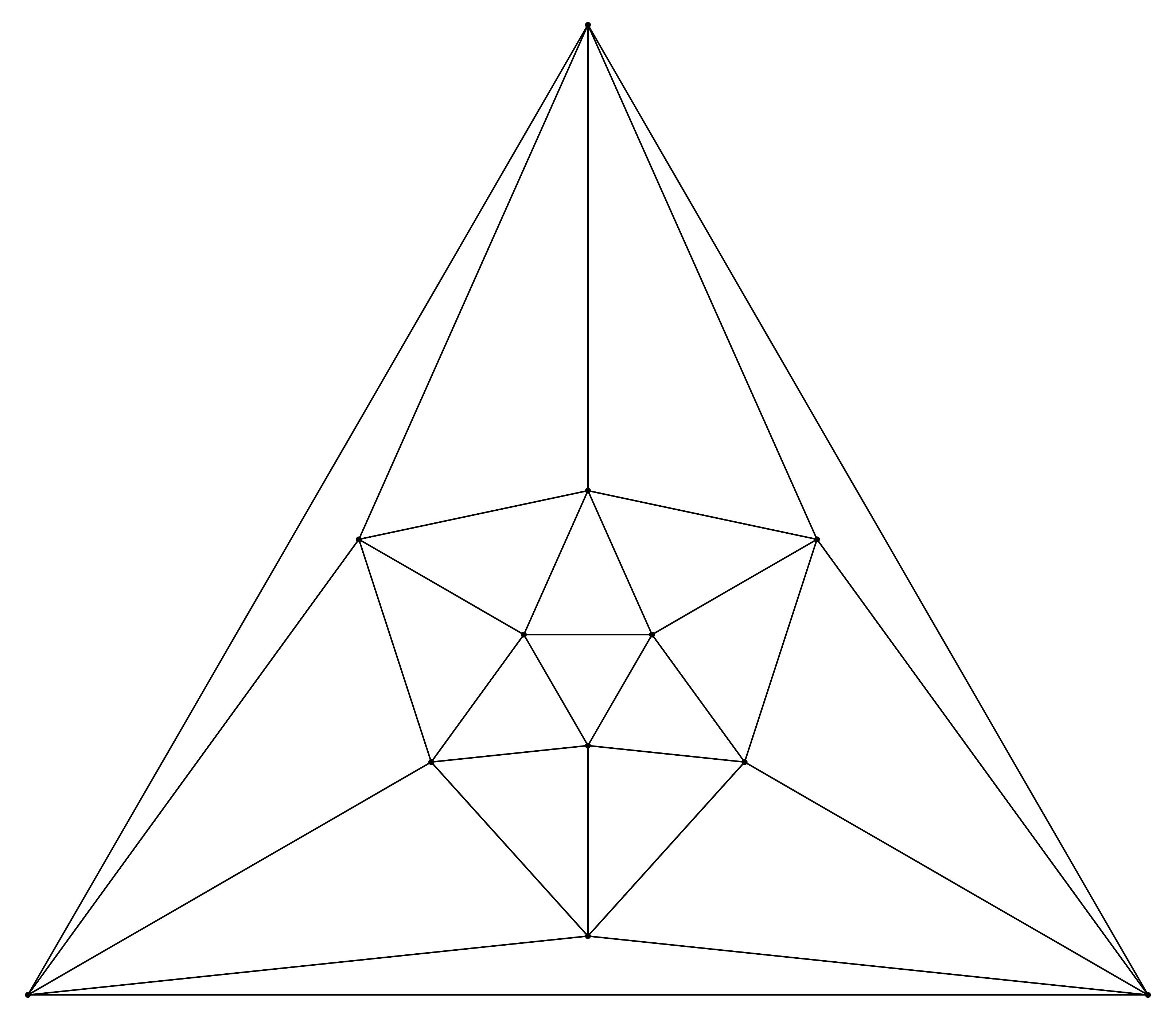}
    \caption{Icosahedron gives a $5$-regular matchstick graph on $12$ vertices on the sphere}\label{fig12}
\end{figure}
\begin{figure}
    \includegraphics[scale=0.138, trim = 0 1 0 1, clip]{24.png} \hfill  \includegraphics[scale=0.145]{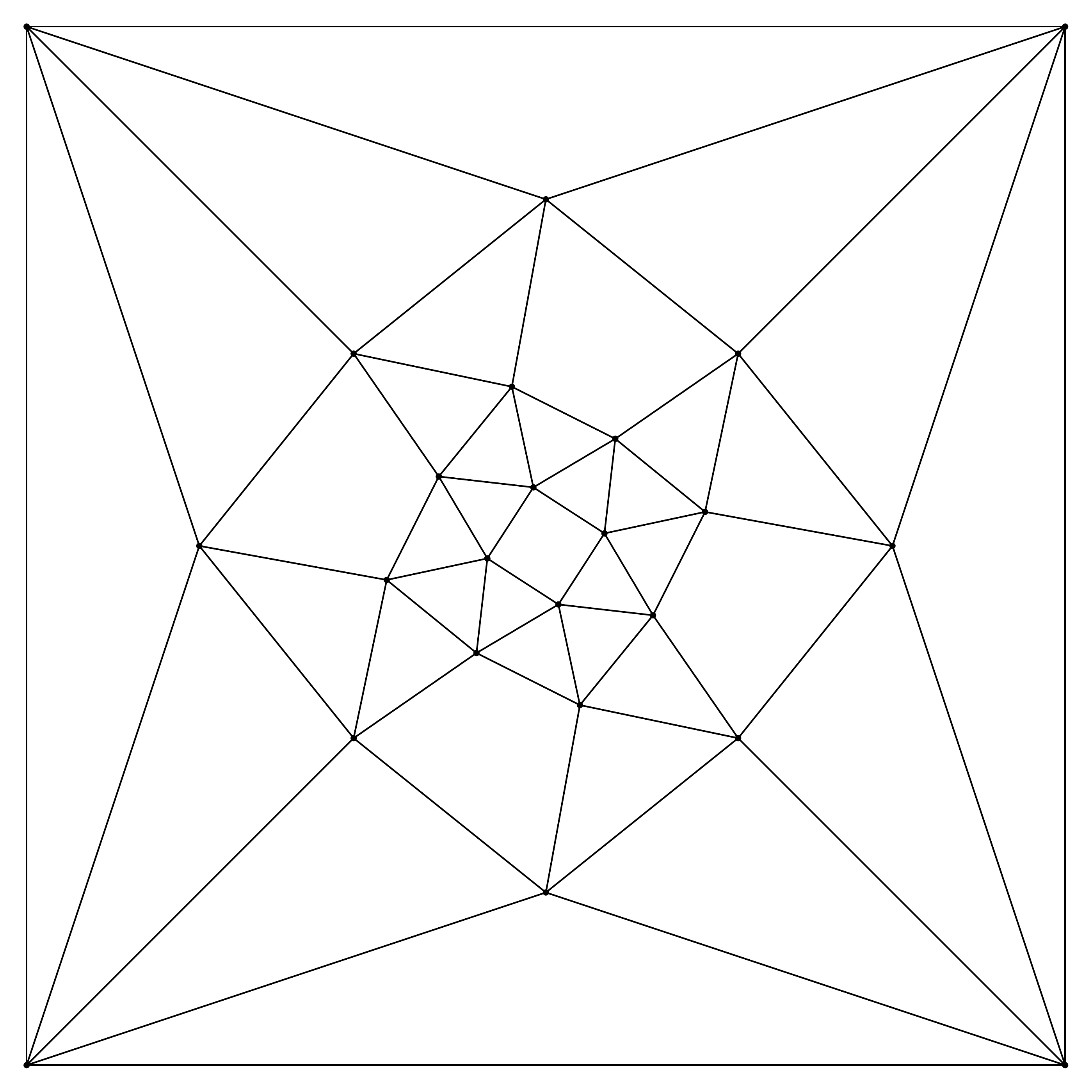}
    \caption{Snub cube gives a $5$-regular matchstick graph on $24$ vertices on the sphere}\label{fig24}
\end{figure}
\begin{figure}
    \includegraphics[scale=0.138, trim = 0 1 0 1, clip]{48.png} \hfill  \includegraphics[scale=0.145]{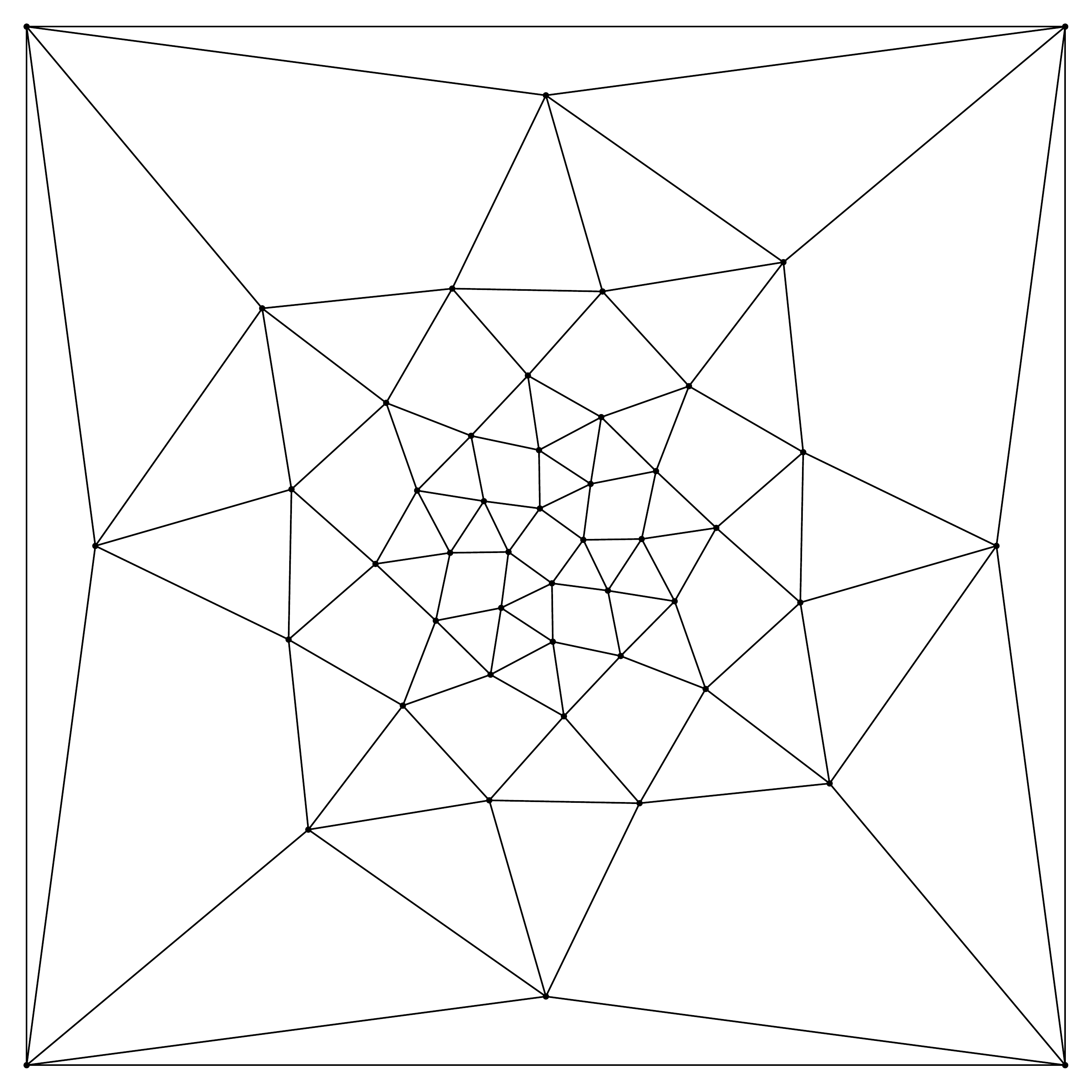}
    \caption{Robinson's $5$-regular matchstick graph on $48$ vertices on the sphere}\label{fig48}
\end{figure}
\begin{figure}
    \includegraphics[scale=0.142, trim = 0 1 0 1, clip]{60.png} \hfill  \includegraphics[scale=0.155]{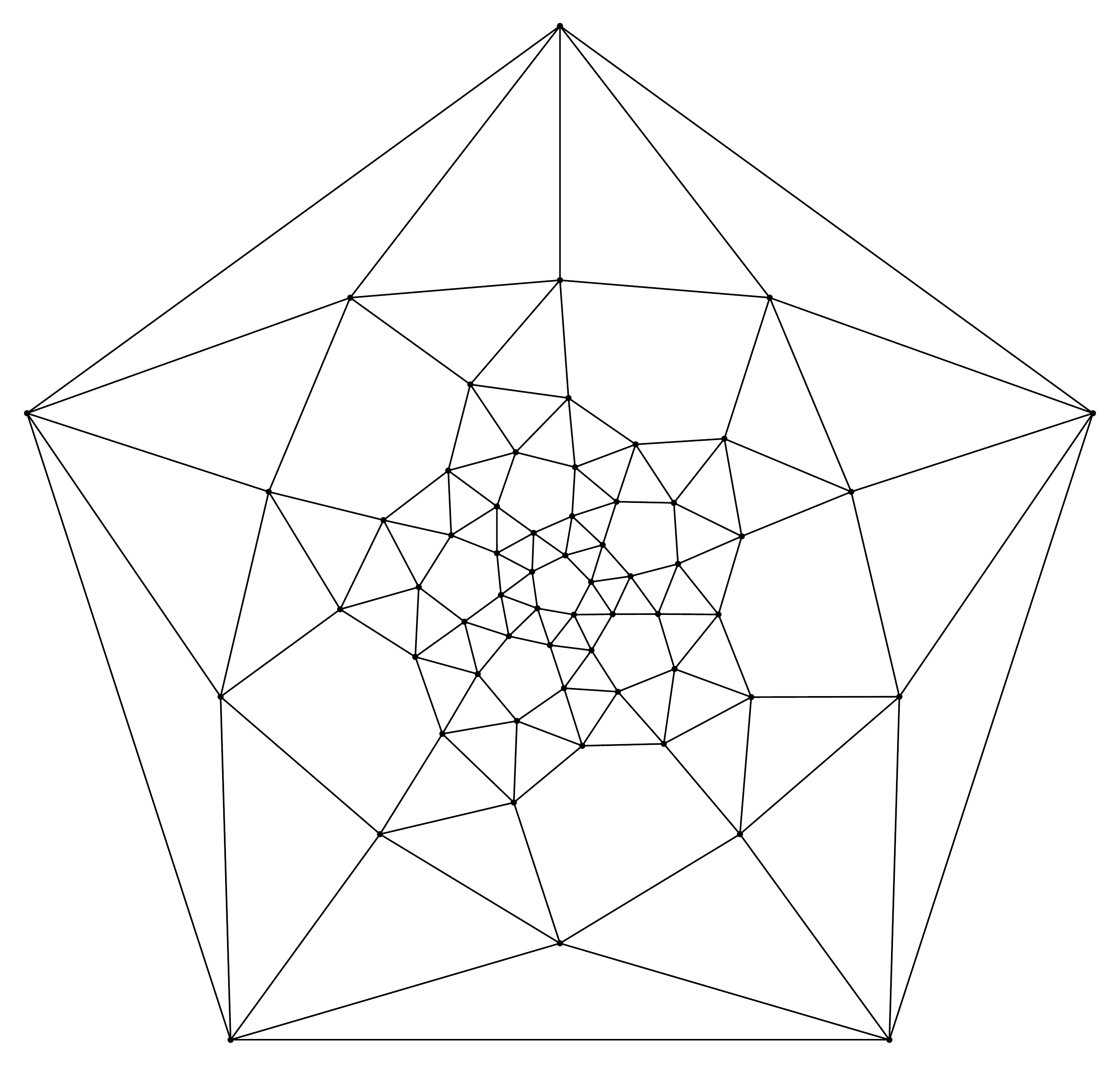}
    \caption{Snub icosahedron gives a $5$-regular matchstick graph on $60$ vertices on the sphere}\label{fig60}
\end{figure}
\begin{figure}
     \includegraphics[scale=0.142, trim = 0 1 0 1, clip]{120.png} \hfill  \includegraphics[scale=0.155]{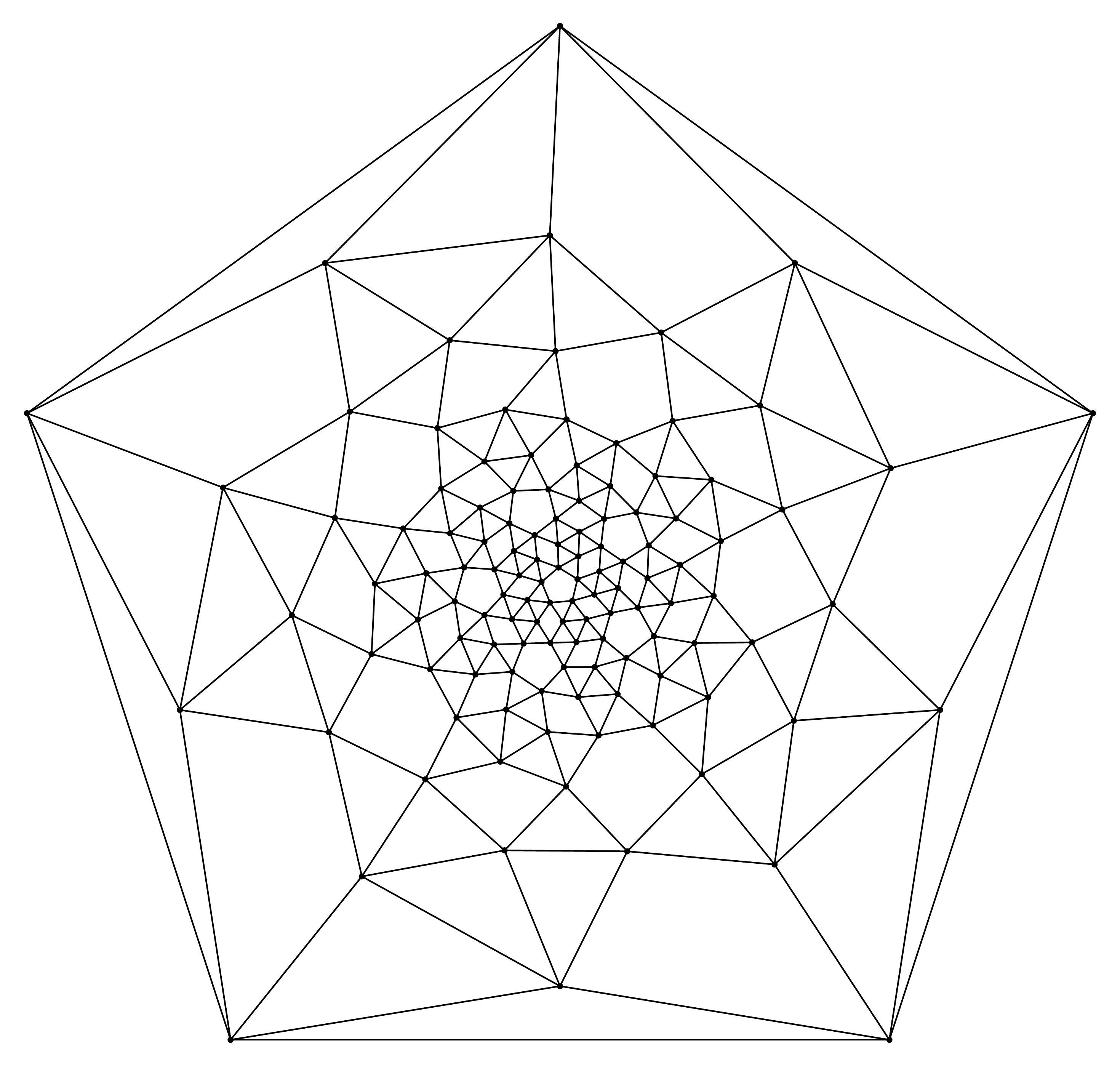}
    \caption{Robinson's $5$-regular matchstick graph on $120$ vertices on the sphere}\label{fig120}
\end{figure}

The assumption that the length $\lambda < \pi$ is not much of a restriction, because if $\lambda \geq \pi$, then no vertex of a matchstick graph with edges of angular length $\lambda$ can be incident to more than one edge without edges crossing, unless $\lambda=\pi$ and the vertex is joined to its diametrically opposite point with more than one edge (but then $G$ is not a simple graph).
If $\lambda < \pi$, then $G$ is necessarily a simple graph, which is what we assume throughout the paper.
Since we will only consider angular lengths, the radius of the sphere is irrelevant.
However, we will be working with the area of regions on the sphere, so we fix the sphere to be the unit sphere $\mathbb{S}^2$ of area $4\pi$.

\smallskip
In the proof of our Theorem below, we will use some elementary results from spherical geometry:

\begin{enumerate}[wide, labelindent=0pt]
\item The area of a simple spherical polygon with $n$ sides consisting of great circular arcs is equal to $\sum_{i=1}^k\alpha_i - (n-2)\pi$, where $\alpha_1,\dots,\alpha_n$ are the interior angles at the vertices of the polygon.
\item If two sides of a spherical triangle have equal angular length (in other words, the spherical triangle is isosceles) then the opposite angles are equal.
Also, in a spherical triangle with two sides $a$ and $b$ and corresponding opposite angles $\alpha$ and $\beta$, if $a > b$, then $\alpha > \beta$.
\item The spherical triangle inequality: $a < b+c$ in a spherical triangle with sides $a, b, c$.
\end{enumerate}

\begin{theorem}\label{maintheorem}
There are exactly five matchstick graphs on the sphere with minimum degree $5.$
They are Robinson's five $5$-regular contact graphs of packings of congruent spherical caps \textup{\cite{Robinson1969}} \textup{(}Figures~\ref{fig12} to \ref{fig120}\textup{)}.
\end{theorem}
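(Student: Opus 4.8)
The plan is to follow the program announced in the introduction: use a discharging argument in the spirit of Kurz and Pinchasi \cite{kurz} to prove that every matchstick graph $G$ on $\mathbb{S}^2$ with minimum degree at least $5$ is the contact graph of a packing of congruent spherical caps in which each cap touches exactly five others, and then quote Robinson's classification \cite{Robinson1969} of such contact graphs to conclude that there are precisely five of them. Since $G$ is drawn on the sphere without crossings it is a spherical map, so I would first record Euler's formula $V-E+F=2$ together with the handshake identities $\sum_v \deg v = 2E = \sum_f k_f$, where $k_f$ is the number of sides of the face $f$. Substituting the spherical polygon area formula into $4\pi = \sum_f \area(f)$ merely reproduces Euler's formula, so the combinatorial data alone carries no information: the geometry of the fixed edge length $\lambda$ must enter through the angles, and this is exactly what the charging does.

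The trigonometric engine I would isolate as a lemma. In the isosceles spherical triangle with two legs of length $\lambda$ and apex angle $\theta$, the base $b(\theta)$ satisfies $\cos b = \cos^2\lambda + \sin^2\lambda\,\cos\theta$, so $b$ is strictly increasing in $\theta$; letting $\alpha=\alpha(\lambda)$ be the apex angle of the equilateral triangle, i.e.\ the solution of $b(\alpha)=\lambda$, one finds $\cos\alpha = \cos\lambda/(1+\cos\lambda)$. As $\cos\lambda<1$ for $0<\lambda<\pi$, this gives $\cos\alpha<\tfrac12$, hence the crucial strict inequality $\alpha(\lambda)>\pi/3$ coming from the spherical excess. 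With this lemma the conclusion is nearly immediate once the packing property is available: if no two vertices lie at angular distance less than $\lambda$, then for any vertex $v$ with cyclically consecutive neighbours $u_i,u_{i+1}$ the triangle $vu_iu_{i+1}$ has both legs equal to $\lambda$ and base at least $\lambda$, so by the lemma and the isosceles comparison its apex angle is at least $\alpha(\lambda)$; as the $\deg v$ apex angles fill the full angle $2\pi$ about $v$, we get $2\pi \geq \deg(v)\,\alpha(\lambda) > \deg(v)\,\pi/3$, forcing $\deg v\leq 5$. Combined with $\deg v\geq 5$ this yields $5$-regularity, and the caps of angular radius $\lambda/2$ about the vertices form a packing whose contact graph has the same degree bound and therefore coincides with $G$, each cap touching exactly five others; Robinson's theorem \cite{Robinson1969} then finishes the classification.

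Everything thus reduces to the packing property: \emph{no two vertices of $G$ are at angular distance less than $\lambda$.} This is where the discharging does its real work, and where I expect the main difficulty. The idea is to attach to each vertex and each face a charge measured by spherical angles (equivalently, by area through the polygon area formula), so that the total charge is the positive curvature $4\pi$ rather than the negative quantity that produces the contradiction in the plane; the planar impossibility of a $5$-regular matchstick graph then becomes, on the sphere, a tight inequality whose equality case is precisely a packing. Concretely, I would bound below the angle that a $k$-sided face must contribute at each of its vertices, using that all its sides have the fixed length $\lambda$ together with the triangle inequality and the isosceles comparison, and then discharge the deficit created by a hypothetical short edge — an apex angle $\theta_i<\alpha(\lambda)$, or more generally two vertices closer than $\lambda$ — against the neighbouring faces.

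The obstacle I anticipate is twofold. First, the contact graphs on $24$, $48$, $60$, and $120$ vertices are \emph{not} triangulations: they carry square and pentagonal faces in which consecutive neighbours sit strictly farther than $\lambda$ apart, so the discharging must tolerate this genuine slack — the excess $2\pi-\deg(v)\,\alpha(\lambda)$ being absorbed by these larger faces — while still excluding the reverse situation of two vertices closer than $\lambda$. Second, a close pair need not consist of consecutive neighbours of a common vertex, so a purely local count around single vertices is insufficient; I would need a global area comparison, arguing that a violation of the packing condition lets the caps of radius $\lambda/2$ overlap and thereby recovers enough area to push $\sum_f \area(f)$ strictly below the total area $4\pi$ of the sphere, a contradiction. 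Calibrating the discharging rules so that this heuristic becomes a proof — and in particular so that each of Robinson's five configurations is realised exactly as an equality case — is the heart of the matter.
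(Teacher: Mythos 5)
Your overall frame is the right one---discharging in the spirit of Kurz--Pinchasi \cite{kurz} to reduce to a packing, then Robinson's classification \cite{Robinson1969}---and your trigonometric lemma (the equilateral spherical triangle of side $\lambda$ has apex angle $\alpha(\lambda)>\pi/3$, whence the packing property forces $\deg v\leq 5$) is correct. But the proposal defers precisely the step that constitutes the proof: you reduce everything to the packing property and then explicitly leave ``calibrating the discharging rules'' as ``the heart of the matter.'' In the paper that calibration is concrete and is where all the work happens: initial charges $\frac{\deg(v)}{2}-3$ on vertices and $\deg(f)-3+\frac{3}{2\pi}\area(f)$ on faces sum to at most $0$ by Euler's formula (an inequality, since connectedness is not assumed), the Kurz--Pinchasi transfer function $c(\alpha)$ moves charge from faces to vertices, and checking non-negativity of all final charges forces equality throughout. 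The equality analysis then \emph{directly} yields that $G$ is connected, $5$-regular, and that every face is a triangle, quadrilateral or pentagon with all interior angles in $(\pi/3,2\pi/3]$. Note this inverts your order: $5$-regularity falls out of the discharging equality case, so your lemma deriving $\deg v\leq 5$ from the packing property, while valid, is not needed; instead the packing property is proved \emph{afterwards}, by spherical geometry, not by discharging.

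More seriously, the mechanism you propose for the hardest step---non-adjacent vertices closer than $\lambda$---cannot work as stated. The carrier $\|G\|$ has measure zero, so $\sum_{f}\area(f)=4\pi$ identically, regardless of where the vertices sit; overlapping caps of radius $\lambda/2$ have no effect on face areas, so no violation of the packing condition can ``push $\sum_f\area(f)$ strictly below $4\pi$.'' The paper replaces this with two geometric steps absent from your proposal. First, the angle bounds from the equality case force all diagonals of quadrilateral and pentagonal faces to exceed $\lambda$ (e.g.\ in a quadrilateral $abcd$, the isosceles comparison $\angle bac=\frac12\angle a\leq\pi/3<\angle b$ gives $|ac|>|bc|=\lambda$, with a similar shortest-diagonal argument for pentagons). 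Second, a minimality argument handles arbitrary close pairs: if $xy$ is a \emph{shortest} non-edge with $|xy|\leq\lambda$, then $x$ and $y$ lie in no common face, so the arc $xy$ crosses some edge $zw$ at a point $p$; choosing $zw$ so that nothing crosses $xp$ earlier puts $x$ and $z$ on a common face, whence $|xz|\geq\lambda$, and the triangle inequality in $\triangle pxz$ and $\triangle pyw$ gives $|xy|+\lambda>|xz|+|wy|\geq\lambda+|wy|$, so $wy$ is a shorter non-edge, a contradiction. This is exactly the global argument you correctly sensed was needed for pairs that are not consecutive neighbours of a common vertex (and, as the paper remarks, it also substantiates a claim Robinson left unproved); without it, and without the concrete verification of the discharging, your proposal is a sound outline with the two decisive arguments missing.
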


\begin{proof}
By Robinson's result, it is sufficient to prove that any matchstick graph $G$ on the sphere with minimum degree $5$ is necessarily a $5$-regular contact graph of a packing of equal spherical caps.

Let $V$ denote the vertex set and $E$ the edge set of $G$.
We denote by $\|G\|$ the set of points in the union of all the minor arcs, including their endpoints, in the drawing of $G$ (also called the \emph{carrier} of $G$).
We define a \emph{face} of $G$ as a connected component of the complement $\mathbb{S}^2\setminus \|G\|$ of $\|G\|$ in the sphere.
Let $F$ denote the set of faces of $G$.
By the Euler formula for plane graphs, $|V|-|E|+|F|\geq 2$, with equality only if $G$ is connected.
An edge of $G$ \emph{belongs to} a face if there are points in the face arbitrarily close to some point in the relative interior of the edge.
Similarly, a vertex of $G$ \emph{belongs to} a face if there are points in the face arbitrarily close to the vertex.
The \emph{boundary} of a face is the subgraph consisting of all the vertices and edges belonging to it.
We define the \emph{degree of a face} $f$, denoted by $\deg(f)$, to be the number of edges belonging to $f$, 
where an edge belonging to $f$ is counted twice if $f$ lies on both sides of the edge.
Then $\sum_{f\in F} \deg(f) = 2|E|$.
As usual, the \emph{degree of a vertex} $v$, denoted by $\deg(v)$, is the number of edges in $G$ incident to $v$. 
Then $\sum_{v\in V} \deg(v) = 2|E|$.
Denoting the area of a region $R$ of $\mathbb{S}^2$ by $\area(R)$, we thus obtain
\begin{align*}
&\mathrel{\phantom{=}} \sum_{v\in V} \left(\frac{\deg(v)}{2}-3\right) + \sum_{f\in F}\left(\deg(f)-3 + \frac{3}{2\pi}\area(f)\right)\\
&= 3|E| - 3|V| - 3|F| + \frac{3}{2\pi}\sum_{f\in F}\area(f)\\
&\leq -6 + \frac{3}{2\pi}\area(\mathbb{S}^2) = 0.
\end{align*}
Thus, if we assign \emph{initial charges} of $\frac{\deg(v)}{2}-3$ to each vertex $v$ and $\deg(f)-3+\frac{3}{2\pi}\area(f)$ to each face $f$, then the total initial charge assigned to vertices and faces is $\leq 0$, with equality implying that $G$ is connected.

For each angle $\alpha$ made by an incident vertex-face pair (there is more than one such angle at a vertex $v$ if the removal of $v$ from $G$ increases the number of connected components), we now transfer a charge of
\[ c(\alpha) = \begin{cases}
    0 & \text{if $\alpha\leq \pi/3$}\\
    \frac{3}{2\pi}\alpha-\frac{1}{2} & \text{if $\pi/3\leq\alpha\leq 2\pi/3$}\\
    \frac{1}{2} & \text{if $\alpha\geq 2\pi/3$}
  \end{cases} \]
from the face to the vertex.
(This is the same discharging function as is used in \cite{kurz}.)
The charges after this transfer are called \emph{final charges}.
We next show that all final charges are non-negative.
Additionally, if the final charge of a vertex $v$ is $0$, then $v$ has to be of degree $5$, while if the final charge of a face $f$ is $0$, then the $f$ is either a triangle, quadrilateral or pentagon, and each interior angle of the face is in the interval $(\pi/3,2\pi/3]$.

If a vertex has degree $6$ or more, its initial charge is already non-negative.
If its final charge is $0$, then all its angles have to be $\leq\pi/3$.
Since the sum of its angles is $2\pi$, this means that each angle is equal to $\pi/3$.

Next consider a vertex of degree $5$ with angles $\alpha_1,\dots,\alpha_5$.
Its initial charge is $-1/2$, and $\sum_i\alpha_i=2\pi$.
If some $\alpha_i\geq 2\pi/3$, then a charge of $1/2$ is transferred, and the final charge is non-negative.
Otherwise, all $\alpha_i < 2\pi/3$, and the charge received by the vertex from angle $\alpha_i$ is $c(\alpha_i)\geq\frac{3}{2\pi}\alpha_i-\frac{1}{2}$.
Summing over the five angles, we obtain that the total charge received by the vertex is at least $1/2$,
and the final charge is also non-negative in this case.

We next show that all the faces have a non-negative final charge.
In total, there are exactly $\deg(f)$ angles at the vertices belonging to a face $f$.
The initial charge of $f$ is greater than $\deg(f)-3$, and a charge of at most $1/2$ is received from $f$ by each vertex belonging to it, which gives that the final charge of $f$ is greater than $\deg(f)-3-\deg(f)/2$, which is non-negative if $\deg(f)\geq 6$.
Note that this takes care of all faces with a boundary that is not a cycle.
Indeed, suppose that the boundary of a face $f$ is not a cycle.
If the boundary contains two cycles, then, since they both have length at least $3$ and are edge-disjoint, $\deg(f)\geq 6$.
If the boundary has no cycle, then it has a vertex of degree at most $1$, which contradicts the minimum degree assumption on $G$.
In the remaining case, the boundary of $f$ contains exactly one cycle, together with at least one more edge.
However, in this case we again obtain a vertex of degree $1$, a contradiction as before.

We still have to consider faces with at most $5$ boundary edges.
We have just proved that the boundaries of these faces are cycles.

Consider a triangular face $f$ with equal angles $\alpha > \pi/3$ and initial charge $\frac{3}{2\pi}\area(f)$.
Then $\area(f)=3\alpha-\pi$, and the total charge going to its vertices is \[3c(\alpha)\leq 3\left(\frac{3}{2\pi}\alpha-\frac{1}{2}\right)=\frac{3}{2\pi}\area(f).\]
Thus, the final charge of $f$ is non-negative, and equals $0$ only if $\alpha\leq 2\pi/3$.

Next consider a quadrilateral face $f$ with two opposite angles $\alpha$ and the other two opposite angles $\beta$, where $\alpha\leq\beta$.
Then $\area(f)=2\alpha+2\beta-2\pi$, which gives $\alpha+\beta > \pi$.
The initial charge is $\frac{3}{2\pi}\area(f)+1$.
If $\alpha\leq\pi/3$, then $\beta > 2\pi/3$, so the charge going to the vertices is $2c(\alpha)+2c(\beta)=2\cdot 0+2\cdot\frac{1}{2} < \frac{3}{2\pi}\area(f)+1$.
Otherwise, $\beta\geq\alpha > \pi/3$, and then the total charge going to the vertices is
\[2c(\alpha)+2c(\beta)\leq\frac{3}{2\pi}(2\alpha+2\beta)-\frac{4}{2} = \frac{3}{2\pi}\area(f)+1,\] and equality implies that $\pi/3 < \alpha\leq\beta\leq 2\pi/3$.

Finally, consider a pentagonal face $f$ with angles $\alpha_1,\dots,\alpha_5$.
Then $\area(f)=\sum_{i=1}^5\alpha_i-3\pi$, and the initial charge is $\frac{3}{2\pi}\area(f)+2$.
If some $\alpha_i\leq\pi/3$, then the total charge moving to the vertices is at most $4\cdot\frac{1}{2} < \frac{3}{2\pi}\area(f) + 2$.
Otherwise, all $\alpha_i > \pi/3$, and then the total charge transferred is
\[\sum_{i=1}^5c(\alpha_i)\leq\sum_{i=1}^5\left(\frac{3}{2\pi}\alpha_i-\frac{1}{2}\right) = \frac{3}{2\pi}\area(f)+2.\]
Equality implies that all $\alpha_i\in(\pi/3,2\pi/3]$.

Thus, all final charges are non-negative.
Since the total sum of the charges is non-positive, we get equality throughout, which implies that the graph is connected, all faces are triangles, quadrilaterals or pentagons with all their interior angles lying in the interval $(\pi/3,2\pi/3]$, and all vertices have degree~$5$.

To finish the proof, we show that $G$ is the contact graph of the packing of circular caps centered at the vertices of $G$, with each cap of angular diameter equal to the angular length $\lambda$ of the edges of $G$.
Note that it is sufficient to show that the angular distance between any two non-adjacent vertices is greater than $\lambda$.
We do this in two steps.
First, we show that the diagonals of the quadrilaterals and pentagons all have angular length $ > \lambda$.
Then we show that for any two vertices $x,y$ with $xy$ not an edge, its angular length $|xy| > \lambda$.
(Robinson \cite[p. 298]{Robinson1969} also considers the situation where the sphere is subdivided into convex polygons of side length $\lambda$ and all diagonals of length $ > \lambda$, and claims that this means that all distances between vertices are $\geq\lambda$.
Our second step proves this claim.)

\smallskip
\noindent\parbox{0.69\textwidth}{\quad First, consider a quadrilateral $abcd$.
Since all of its interior angles are $\leq 2\pi/3$, it contains its diagonals $ac$ and $bd$.
Since $\triangle bac$ and $\triangle dac$ are congruent isosceles spherical triangles, $\angle bac = \angle cad = \frac12\angle a \leq \pi/3$.
Since $\angle b > \pi/3$,
we have $\angle bac < \angle b$, hence the diagonal $|ac| > |bc| = \lambda$.
Similarly, $|bd| > |cd| = \lambda$.
}\hfill
\parbox{0.267\textwidth}{%
\begin{tikzpicture}[thick, scale=0.8]
\small
    \coordinate [label=above:$a$] (a) at (0,0);
    \coordinate [label={[label distance = 2mm]left:$b$}] (b) at (2,1.25);
    \coordinate [label=above:$c$] (c) at (4,0);
    \coordinate [label={[label distance = 2.2mm]left:$d$}] (d) at (2,-1.25);
    \draw (a) -- (b) 
              -- node[midway, above right=-0.5mm] {$\lambda$} (c)
              -- node[midway, below right=-0.7mm] {$\lambda$} (d) -- cycle;
    \draw[dashed] (a) -- (c); 
    \foreach \x in {(a),(b),(c),(d)} \draw[fill] \x circle (1.5pt);
    \draw (a) + (0.6,0.15) node {\scriptsize$\circ$};
    \draw (a) + (0.6,-0.15) node {\scriptsize$\circ$};
    \draw (c) + (-0.6,0.15) node {\scriptsize$\circ$};
    \draw (c) + (-0.6,-0.15) node {\scriptsize$\circ$};
\end{tikzpicture}
}

\medskip
\noindent\parbox{0.69\textwidth}{\quad Next, consider a pentagon $abcde$.
Again, because the interior angles are all at most $2\pi/3$, all its diagonals are in its interior.
Let $be$ be the shortest of the five diagonals, and suppose $|be| \leq\lambda$.
Then $\angle abe = \angle aeb \geq \angle a > \pi/3$.
Since $\angle b, \angle e \leq 2\pi/3$, it follows that $\angle deb, \angle bed < \pi/3$.
Since the sum of the angles of $\triangle bed$ is more than $\pi$, it follows that $\angle bde > \pi/3 > \angle bed$,
hence $|be| > |bd|$, a contradiction.
Thus, $|be| > \lambda$, hence all diagonals are longer than $\lambda$.
\\[-2.5mm]
}\hfill
\parbox{0.28\textwidth}{%
\begin{tikzpicture}[thick, scale=0.89]
\small
    \coordinate [label=left:$b$] (b) at (-0.3,1.4);
    \coordinate [label=below:$a$
                ] (a) at (1.5,2.5);
    \coordinate [label=right:$e$] (e) at (3.3,1.4);
    \coordinate [label=right:$d$] (d) at (2.7,-0.4);
    \coordinate [label=left:$c$] (c) at (0.3,-0.4);
    \draw (a) -- (b) node[midway,above left] {$\lambda$} -- (c) -- (d) -- (e) -- cycle node[midway,above right] {$\lambda$};
    \draw[dashed] (b) -- node[midway,above=-0.5mm] {$\leq\lambda$}
                (e);
    \draw[dashed] (b) -- (d);
    \foreach \x in {(a),(b),(c),(d),(e)} \draw[fill] \x circle (1.5pt);
    \draw (b) + (0.4,0.12) node {\scriptsize$\circ$};
    \draw (b) + (0.8,-0.2); 
    \draw (e) + (-0.4,0.12) node {\scriptsize$\circ$};
    \draw (d) + (-0.1,0.35); 
    \draw (e) + (-0.3,-0.2); 
\end{tikzpicture}}

\medskip
\noindent\parbox{0.69\textwidth}{\quad Finally, suppose that there are two vertices $x,y$ that do not form an edge of $G$, but $|xy|\leq\lambda$.
We may take $xy$ to be the shortest non-edge.
We have already seen that $x$ and $y$ cannot belong to the same face.
Then the arc $xy$ intersects some edge $zw$ of $G$ at some point $p\neq x,y,z,w$.
We may suppose that no edge of $G$ intersects the arc $xp$ at some point strictly between $x$ and $p$, otherwise we replace $zw$ with that edge.
Note that $z$ and $x$ are vertices of the face \\[-2.8mm]
}\hfill
\parbox{0.28\textwidth}{%
\begin{tikzpicture}[thick, scale=0.85]
\small
    \coordinate [label=left:$x$] (x) at (2,4);
    \coordinate [label=left:$y$] (y) at (2,1);
    \coordinate (z) at (0,2);
    \coordinate (w) at (4,2);
    \coordinate (p) at (2,2);
    \draw (3,3) node{$f$};
        \draw (p) node[above right] {$p$};
        \draw (z) node[left] {$z$};
        \draw (w) node[right] {$w$};
        \draw (z) -- (w);
    \draw[dashed] (x) -- (y);
    \foreach \x in {(x),(y),(z),(w)} \draw[fill] \x circle (1.5pt);
    \draw (z) -- (w);
    \draw[dashed] (x)--(z);
    \draw[dashed] (y) -- (w);
\end{tikzpicture}}

\noindent 
$f$ that contains the arc $xp$, since we can move from $x$ to $z$ first along the arc $xp$, and then right next to the arc $pz$, without touching $\|G\|$ except at $x$ and $z$.
Then $|xz|\geq \lambda$, since $xz$ is either an edge or a diagonal of $f$.
By the triangle inequality applied to $\triangle pxz$ and $\triangle pyw$,
\[|xy|+\lambda = |xy| + |zw| > |xz| + |wy| \geq \lambda + |wy|,\] hence $|wy| < |xy|$.
This implies that $wy$ is a non-edge of $G$ shorter than $xy$, contradicting the minimality of $xy$.

We have shown that all non-edge distances are strictly greater than $\lambda$.
Therefore, $G$ is the contact graph of the packing of circular caps of angular diameter $\lambda$, centered at the vertices of $G$, as desired.
\end{proof}

\section{Final remarks}
\begin{enumerate}[wide, labelindent=0pt]
\item In the above proof we did not assume that $G$ is connected.
Instead, connectedness of $G$ was a natural consequence of the proof.
(Alternatively, if we assumed $G$ to be connected, then the Euler formula would have been an equality, and at the end of the proof we would have to observe that none of Robinson's five contact graphs leave any space for more than one of them occurring on the same sphere.)

\item There are many $3$- and $4$-regular matchstick graphs on the sphere, with the smallest ones coming from the regular tetrahedron and the regular octahedron.
The Archimedean prisms and antiprisms are two infinite families of polyhedra with regular faces inscribed in a sphere, giving infinite families of $3$- and $4$-regular contact graphs of packings of spherical caps on the sphere.
Robinson \cite{Robinson1969} provides many further examples of arbitrarily large $3$- and $4$-regular contact graphs.
Since a planar graph with $n$ vertices has at most $3n-6$ edges, there are of course no $k$-regular planar graphs for any $k\geq 6$.

\item As a consequence of our main theorem we obtain a characterization of all $5$-regular matchstick graphs in the elliptic plane.
The \emph{elliptic plane} is the projective plane together with the angular metric inherited from the sphere by identifying antipodal pairs of points.
As Robinson observed, exactly one of the five $5$-regular packings of circular caps is centrally symmetric, and it follows that there is exactly one $5$-regular matchstick graph in the elliptic plane, namely the complete graph on $6$ vertices coming from the well-known configuration of $6$ equidistant points \cite{Haantjes}.

\item The proof of Kurz and Pinchasi \cite{kurz} also goes through for the hyperbolic plane, giving that there are no $5$-regular matchstick graphs of any hyperbolic length.

\item The proof in \cite{kurz} can be combined with the isoperimetric inequality to show that a matchstick graph in the Euclidean plane is far from $5$-regular: In any matchstick graph with $n$ vertices, the number of vertices of degree at most $4$ is at least $c\sqrt{n}$ \cite{LS2}.
\end{enumerate}

\section*{Acknowledgment}
We thank the anonymous reviewers for their helpful suggestions.

\end{document}